\documentclass[a4paper,12pt]{article}
\usepackage{comment}
\usepackage{cite}
\usepackage{amsmath}
\usepackage{amssymb}
\usepackage{amsfonts}
\usepackage[T1]{fontenc}
\usepackage[utf8]{inputenc}
\usepackage{graphicx}
\usepackage{fancyhdr}
\usepackage{float}
\usepackage{xcolor}
\usepackage{authblk}
\usepackage{mathrsfs}
\usepackage{empheq}
\usepackage[hyphens]{url}
\usepackage[]{amsthm}
\usepackage{tikz}
\usetikzlibrary{arrows.meta, decorations.markings, positioning}
\usepackage{pgfplots}
\pgfplotsset{compat=1.18} 

\usepackage{geometry}
\usepackage{hyperref}

\theoremstyle{plain}
\newtheorem{theorem}{Theorem}[section]
\newtheorem{proposition}[theorem]{Proposition}

\newtheorem{corollary}[theorem]{Corollary}

\theoremstyle{definition}
\newtheorem{definition}[theorem]{Definition}
\newtheorem{conjecture}[theorem]{Conjecture}

\theoremstyle{remark}
\newtheorem{remark}{Remark}[section]
\newtheorem{example}{Example}[section]

\begin{document}

\title{Modular Ackermann maps and hierarchical structures}

\author[$\dagger$]{Jean-Christophe {\sc Pain}$^{1,2,}$\footnote{jean-christophe.pain@cea.fr}\\
\small
$^1$CEA, DAM, DIF, F-91297 Arpajon, France\\
$^2$Universit\'e Paris-Saclay, CEA, Laboratoire Mati\`ere en Conditions Extr\^emes,\\ F-91680 Bruy\text{\`e}res-le-Ch\^atel, France}

\date{}

\maketitle

\begin{abstract}
We introduce and study modular truncations of the Ackermann function, formalized as discrete dynamical trajectories on the set of least non-negative residues. These maps form a hierarchy of rapidly increasing compositional complexity indexed by recursion depth. We investigate their structural properties, sensitivity to depth variation, and the induced distributions modulo powers of two. While such hierarchical constructions are superficially motivated by hash-type mixing functions, we analyze how powers of two interact with the recursive structure modulo $2^k$, leading to strong saturation effects in the depth $m=3$ case. Instead of the expected asymptotic equidistribution, an absorption phenomenon occurs where the uniform measure concentrates onto a localized subset of residues, driving the total variation distance from the uniform distribution to 1.
\end{abstract}

\section{Introduction}

Historically, the study of rapidly growing functions beyond the class of primitive recursive functions has played a central role in the foundations of computability theory. Primitive recursive functions, introduced by G\"odel~\cite{Goedel}, form a large class of total functions built from basic arithmetic operations via iteration and composition, but they remain bounded in growth. In 1928, Ackermann~\cite{Ackermann} constructed an explicit total function that is not primitive recursive, now known as the Ackermann function, demonstrating that the hierarchy of computable functions extends beyond primitive recursion. Sudan~\cite{Sudan} independently introduced a similar rapidly growing function, illustrating that total computable functions can exceed any fixed primitive recursive bound. These examples provided concrete counterexamples to the idea that all ``simple'' total functions are primitive recursive and laid the groundwork for modern studies of fast-growing hierarchies and computational complexity~\cite{Peter}.

The Ackermann function occupies a central position in the theory of fast-growing recursive hierarchies. Its growth exceeds all primitive-recursive functions and exhibits extreme sensitivity to recursion depth. While extensively studied from the viewpoint of computability and proof theory, its behavior under modular truncation yields surprising properties that counter intuitive mixing behavior.

The purpose of this paper is twofold. First, we introduce canonical modular Ackermann maps on the set of least non-negative representatives and analyze their structural properties. Second, we formalize the structural collapse of the hierarchy modulo powers of two, showing how an absorption mechanism overrules pseudo-random expectations.

The remainder of this paper is organized as follows. In Section~2, we introduce modular Ackermann maps and detail their structural properties, growth behavior, and distribution modulo powers of two. Section~3 discusses hypothetical depth-dependent cryptographic variants. Section~4 reports numerical experiments, highlighting how restricted simulation domains can yield misleading uniformity metrics. Section~5 provides the rigorous proof of the rapid stabilization and structural absorption of tetration modulo $2^k$.

\section{The modular Ackermann hierarchy}

The classical Ackermann function is defined by
\begin{align*}
A(0,n)&=n+1,\\
A(m,0)&=A(m-1,1),\\
A(m,n)&=A(m-1,A(m,n-1)).
\end{align*}
To avoid any ambiguity regarding the quotient ring $\mathbb{Z}/N\mathbb{Z}$, we define the modular map directly on $\mathbb{N}$ by lifting the core recursion to the set of least non-negative representatives. Let $\text{res}_N: \mathbb{Z} \to \{0, \dots, N-1\}$ be the canonical projection.

\begin{definition}[Modular Ackermann map]
Let $N\ge2$. For integers $m,n\ge0$, we define the map $A_N(m,n)$ from $\mathbb{N} \times \mathbb{N}$ to the set of least non-negative residues $\mathcal{R}_N = \{0, \dots, N-1\}$ by
\[
A_N(m,n)=
\begin{cases}
\text{res}_N(n+1) & m=0,\\
A_N(m-1,1) & m>0,\ n=0,\\
A_N(m-1,A_N(m,n-1)) & m>0,\ n>0.
\end{cases}
\]
For a fixed parameter $m$, the restriction of $A_N(m, \cdot)$ to $\mathcal{R}_N$ yields a well-defined self-map on the set of representatives $\mathcal{R}_N$.
\end{definition}

\begin{remark}
A crucial algebraic property of $A_N(m,n)$ is that it exhibits a failure of functorial descent to the quotient ring $\mathbb{Z}/N\mathbb{Z}$. The input variable $n$ does not descend to a well-defined function on $\mathbb{Z}/N\mathbb{Z}$ independently of the choice of representatives, because the inner recursive branches evaluate intermediate operations that depend strictly on the integer magnitude of the arguments. For example, if $N=16$, $0 \equiv 16 \pmod{16}$, but $A_{16}(3,0) = 5$ while $A_{16}(3,16) = 13$. 
\end{remark}

\subsection{Structural hierarchy}

\begin{proposition}
For each $m\ge0$, the maps $A_N(m,\cdot)$ satisfy
\[
A_N(m+1,n)=A_N\big(m,A_N(m+1,n-1)\big)
\]
for $n>0$.
\end{proposition}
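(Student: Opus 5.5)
The identity is the third clause of the Definition of the modular Ackermann map, read at depth $m+1$ instead of $m$. So the plan is to check that this clause applies, and that both sides are well-defined values in $\mathcal{R}_N$.

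\textbf{Applying the defining clause.} Fix $m\ge 0$ and $n>0$, and put $m' = m+1$. Since $m'>0$ and $n>0$, the third case of the Definition gives
\[
A_N(m',n)=A_N\big(m'-1,A_N(m',n-1)\big).
\]
Substituting $m'-1=m$ yields exactly the claimed identity. Nothing beyond unfolding the definition is needed for the algebra.

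\textbf{Well-definedness.} The one point needing attention is that the recursion really defines $A_N$ as a total function, so that both sides make sense. I would prove this by induction on the pair $(m,n)$ in the lexicographic order on $\mathbb{N}\times\mathbb{N}$, which is a well-order.
\begin{itemize}
\item For $m=0$, the value $\mathrm{res}_N(n+1)$ is given explicitly.
\item For $m>0$ and $n=0$, the value refers to $(m-1,1)$, which is lexicographically smaller.
\item For $m>0$ and $n>0$, the value refers first to $(m,n-1)$, which is smaller. The result is some integer $r\in\mathcal{R}_N$, and the value then refers to $(m-1,r)$, which is smaller than $(m,n)$ whatever $r$ is.
\end{itemize}
Hence each value is determined by strictly earlier ones, and every value lies in $\mathcal{R}_N$. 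For the identity at hand, this means the inner value $A_N(m+1,n-1)$ lies in $\mathcal{R}_N\subset\mathbb{N}$. It is therefore a legitimate second argument for $A_N(m,\cdot)$, and the outer evaluation is also defined.

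\textbf{Where care is needed.} The argument is essentially tautological, so the main obstacle is interpretive rather than technical. By the Remark on the failure of descent to $\mathbb{Z}/N\mathbb{Z}$, the identity must be read on representatives: the inner output is fed to $A_N(m,\cdot)$ as the integer in $\{0,\dots,N-1\}$, not as a residue class. I would state explicitly that the composition is taken in this sense. This also makes clear that the proposition expresses $A_N(m+1,\cdot)$ as iterated composition of the self-map $A_N(m,\cdot)|_{\mathcal{R}_N}$, in the spirit of the structural hierarchy.
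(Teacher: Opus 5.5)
Your proof is correct and matches the paper's, which simply says the identity is immediate from the defining recursion, namely the third clause read at depth $m+1$. Your lexicographic-induction argument, showing that $A_N$ is a total function with values in $\mathcal{R}_N$, is a sound extra detail that the paper leaves implicit.
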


\begin{proof}
Immediate from the defining recursion.
\end{proof}

\subsection{Growth and nonlinear amplification}

For small $m$ the classical Ackermann function admits closed forms:
\[
A(1,n)=n+2,\qquad
A(2,n)=2n+3,\qquad
A(3,n)=2^{n+3}-3.
\]
Hence modulo $N$, we have
\[
A_N(1,n)=\text{res}_N(n+2),\quad A_N(2,n)=\text{res}_N(2n+3),
\]
and
\[
A_N(3,n)=\text{res}_N(2^{n+3}-3).
\]
For $m=3$, we have $A(3,n)=2^{n+3}-3$. When $n+3 < k$, i.e. $n < k-3$, no modular reduction is effective since the value already lies in $[0,2^k)$ and the integer expression is preserved. When $n \ge k-3$, the term $2^{n+3}$ becomes divisible by $2^k$, inducing a strong saturation effect in the reduced dynamics.

\subsection{Distribution and the absorption phenomenon}

We focus on the case $N=2^k$, a standard setting for digital computation. While one might intuitively expect that the nested exponential structure of $A_{2^k}(m,n)$ suggests strong mixing and rapid decorrelation, the canonical least-residue interpretation yields a total breakdown of equidistribution at level $m=3$.

\begin{theorem}[Absorption modulo powers of two for exponentiation]
\label{thm:absorption_m3}
Let $k \ge 3$, $N=2^k$, and $c=2^k-3$. For the recursion depth $m=3$ and every integer $n \ge 0$:
\begin{enumerate}
    \item[\rm (i)] if $0 \le n < k-3$, then $A_N(3,n) = 2^{n+3}-3 > n$ as an ordinary integer in $\{0, \dots, N-1\}$;
    \item[\rm (ii)] if $n \ge k-3$, then $A_N(3,n) = c$.
\end{enumerate}
\end{theorem}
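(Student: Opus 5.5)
The plan is to derive the closed forms for $A_N(1,\cdot)$, $A_N(2,\cdot)$ and $A_N(3,\cdot)$ directly from the modular recursion in the Definition, by induction on $n$ at each level $m$. The closed forms stated in the subsection on growth are facts about the classical integer function $A$. The modular map reduces at every step, and, as the Remark stresses, it does not descend to $\mathbb{Z}/N\mathbb{Z}$. So I will not simply reduce $A(3,n)$ mod $N$; I will check that the truncated recursion reproduces those residues. The key observation is that at levels $m\le 2$ each step is an affine map in the inner argument, and affine maps commute with $\text{res}_N$.

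\emph{Step 1 (levels $1$ and $2$).} First, $A_N(1,0)=A_N(0,1)=\text{res}_N(2)$. Next, $A_N(1,n)=A_N(0,A_N(1,n-1))=\text{res}_N(A_N(1,n-1)+1)$. Induction gives $A_N(1,n)=\text{res}_N(n+2)$ for \emph{every} $n\ge0$, not only for $n\in\mathcal{R}_N$; this matters because the outer argument at the next level is an arbitrary residue. Then $A_N(2,0)=A_N(1,1)=\text{res}_N(3)$ and $A_N(2,n)=A_N(1,A_N(2,n-1))=\text{res}_N(A_N(2,n-1)+2)$. Induction gives $A_N(2,n)=\text{res}_N(2n+3)$, again for all $n\ge0$.

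\emph{Step 2 (level $3$).} We have $A_N(3,0)=A_N(2,1)=\text{res}_N(5)$ and $A_N(3,n)=\text{res}_N\bigl(2A_N(3,n-1)+3\bigr)$. Let $x_n=2^{n+3}-3$ be the integer solution of $x_n=2x_{n-1}+3$ with $x_0=5$. Since $\text{res}_N(2\,\text{res}_N(y)+3)=\text{res}_N(2y+3)$ for any integer $y$, induction gives $A_N(3,n)=\text{res}_N(2^{n+3}-3)$ for all $n\ge0$. I expect this step to be the main point requiring care. One must argue explicitly that the intermediate reductions are harmless here precisely because the level-$2$ map is affine in its argument, in contrast to the dependence on magnitude in the Remark (which concerns the \emph{outer} argument $n$, not the inner one).

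\emph{Step 3 (the two cases).} Now let $N=2^k$ with $k\ge3$. For (i), $0\le n<k-3$ gives $n+3<k$, hence $0<2^{n+3}-3<2^k$, so no reduction occurs and $A_N(3,n)=2^{n+3}-3$. Moreover $2^{n+3}\ge n+4$ (for instance by Bernoulli, or since $2^j\ge j+1$), so $2^{n+3}-3\ge n+1>n$. For (ii), $n\ge k-3$ gives $2^k\mid 2^{n+3}$, so $2^{n+3}-3\equiv -3\equiv 2^k-3\pmod{2^k}$. Since $k\ge3$ we have $0\le 2^k-3<2^k$, so the least non-negative residue is exactly $c=2^k-3$. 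As a sanity check, for $k=4$ and $n=16$ this gives $13$, in agreement with the Remark.
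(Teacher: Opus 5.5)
Your proof is correct and follows the paper's route: reduce to the closed form $A_N(3,n)=\text{res}_N(2^{n+3}-3)$, split on whether $2^k$ divides $2^{n+3}$, and use $2^{n+3}\ge n+4$ where the paper uses the increasing difference function $h(n)=2^{n+3}-3-n$. The one real difference is that the paper takes the closed form ``by definition'' from the classical formula for $A(3,n)$, whereas you derive it from the modular recursion, using that levels $1$ and $2$ are affine and so commute with $\text{res}_N$; the paper's Remark on failure of descent makes this step necessary, so your version is the more complete one.
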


\begin{proof}
By definition, 
$$
A_N(3,n) = \text{res}_N(2^{n+3}-3).
$$
If $n \ge k-3$, then $n+3 \ge k$, implying that $2^{n+3} \equiv 0 \pmod N$. Thus, 
$$
A_N(3,n) = \text{res}_N(-3) = N-3 = c,
$$ 
which proves (ii). For assertion (i), let $0 \le n < k-3$. No modular reduction occurs since $2^{n+3}-3 < 2^k$. The function $h(n) = 2^{n+3}-3-n$ satisfies $h(0) = 5$ and 
$$
h(n+1)-h(n) = 2^{n+3}-1 > 0,
$$
hence $h(n) > 0$ for all $n \ge 0$, establishing $A_N(3,n) > n$.
\end{proof}

\begin{corollary}
When the input $n$ is sampled uniformly from $\mathcal{R}_N$ at depth $m=3$, the single residue $2^k-3$ receives a probability mass of at least $1 - (k-3)/2^k$. Consequently, the total variation distance satisfies:
\[
\| \mu_{3,k} - u_k \|_{TV} \ge 1 - \frac{k-2}{2^k}.
\]
As $k \to \infty$, the total variation distance between the induced distribution $\mu_{3,k}$ and the uniform distribution $u_k$ converges to $1$.
\end{corollary}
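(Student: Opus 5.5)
The plan is to read everything off Theorem~\ref{thm:absorption_m3}, with $\mu_{3,k}$ taken to be the pushforward of the uniform measure $u_k$ on $\mathcal{R}_N=\{0,\dots,N-1\}$ under $n\mapsto A_N(3,n)$.

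\textbf{Mass at $c$.} By part (ii), every $n$ with $k-3\le n\le N-1$ is sent to $c=2^k-3$. There are $N-(k-3)=2^k-k+3$ such inputs, and since $k\ge 3$ gives $k-3\le 2^k$, this range is nonempty. Hence
\[
\mu_{3,k}(\{c\})\ \ge\ \frac{2^k-(k-3)}{2^k}=1-\frac{k-3}{2^k},
\]
which is the first claim. I will not need to decide whether any of the $k-3$ remaining inputs also hit $c$: by part (i) their images are $2^{n+3}-3<2^k-3$ for $n<k-3$, so they do not, but only the lower bound is used.

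\textbf{Total variation.} I will use the characterization $\|\mu-\nu\|_{TV}=\sup_{B}|\mu(B)-\nu(B)|$, which gives $\|\mu_{3,k}-u_k\|_{TV}\ge \mu_{3,k}(B)-u_k(B)$ for every set $B$. Taking $B=\{c\}$ and $u_k(\{c\})=2^{-k}$ yields
\[
\|\mu_{3,k}-u_k\|_{TV}\ \ge\ 1-\frac{k-3}{2^k}-\frac{1}{2^k}=1-\frac{k-2}{2^k},
\]
exactly the stated bound. If the paper instead uses the $\ell^1$ normalization $\sum_x|\mu(x)-\nu(x)|$, that quantity is at least the supremum above, so the bound still holds.

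\textbf{Limit.} Since total variation is always at most $1$ and $(k-2)/2^k\to 0$, the squeeze
\[
1-\frac{k-2}{2^k}\le \|\mu_{3,k}-u_k\|_{TV}\le 1
\]
forces the distance to converge to $1$.

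There is no real obstacle here. The one point needing care is that convergence to $1$ requires the upper bound $\|\cdot\|_{TV}\le 1$, which holds only under the $\sup_B$ normalization. If the paper uses the $\ell^1$ sum without the factor $\tfrac12$, the distance would instead tend to $2$, so I would state the supremum convention explicitly before concluding.
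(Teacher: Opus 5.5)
Your proposal is correct and is exactly the argument the paper leaves implicit (it gives no separate proof): count the $2^k-(k-3)$ inputs absorbed into $c$ by Theorem~\ref{thm:absorption_m3}(ii), evaluate the total variation on the singleton $\{c\}$ to get $1-\frac{k-3}{2^k}-\frac{1}{2^k}$, and squeeze against the upper bound $1$. Your remark that the limit $1$ requires the $\sup_B$ (equivalently $\tfrac12\ell^1$) normalization is a worthwhile clarification, since the paper never states which convention it uses.
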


Based on extensive numerical simulations (see Section 4), this absorption profile propagates to higher levels, motivating the following general statement.

\begin{conjecture}[Universal threshold absorption]
\label{conj:universal}
Let $k \ge 3$, $N=2^k$, and $c=2^k-3$. For every fixed depth $m \ge 3$, there exists a threshold $\tau(m,k) \le k$ such that for all $n \ge \tau(m,k)$:
\[
A_N(m,n) = c.
\]
\end{conjecture}

\subsection{Cycle structure}

Since $A_N(m,\cdot)$ acts on the finite set of residues, iterating the map produces cycles. However, the cycle complexity does not grow arbitrarily with $m$ when $N=2^k$; instead, the orbits rapidly fall into trivial traps.

\begin{example}[Cycle structure for $m=3$, $N=16$]
Consider 
$$
A_{16}(3,n) = \text{res}_{16}(2^{n+3}-3)
$$ 
for $n \in \{0, \dots, 15\}$. A few values are
\[
A_{16}(3,0) = 5, \quad A_{16}(3,1) = 13, \quad A_{16}(3,2) = 13, \quad A_{16}(3,n \ge 1) = 13.
\]
Observing the iterations starting from $n=0$:
\[
0 \mapsto 5 \mapsto 13 \mapsto 13 \mapsto 13 \dots
\]
The sequence enters a cycle of length $1$ (the fixed point $13 = 2^4-3$) in at most two steps.
\end{example}

\section{Depth-dependent constructions and variants}

Let $h_1,h_2$ be maps into $\mathbb Z_N$. One can formally define a hierarchical map by 
$$
H(x) = A_N\big(h_1(x),\,h_2(x)\big),
$$ 
or employ dual-depth mixing architectures like 
$$
H(x) = A_N(h_1(x), h_2(x)) \oplus A_N(h_3(x), h_4(x)).
$$ 
While these formulations are structurally appealing for non-linear cryptographic diffusion, they inherit the structural flaw of the underlying modular Ackermann maps. Unless the auxiliary functions $h_i(x)$ are strictly bounded to small integers far below $k$, the evaluated blocks inevitably hit the absorption threshold, resulting in trivial constants and zero-entropy outputs. They must therefore be treated purely as illustrative theoretical models of recursive breakdown rather than deployable primitives.

\section{Global collapse and the illusion of local mixing}
\label{sec:global_collapse}

In this section, we analyze the global behavior of the modular Ackermann map $n \mapsto A_{2^k}(m,n)$ and highlight a major simulation pitfall. One might be tempted to evaluate these maps numerically on a truncated domain (for instance, restricting inputs to $n \le 50$ due to the immense growth of unreduced towers). Such localized implementations can artificially produce an illusion of mixing and near-uniformity, masking the hard boundaries imposed by finite computational structures \cite{Knuth}. 

However, this statistical profile completely vanishes when considering the full domain of representatives $\mathcal{R}_{2^k}$. As proved in Sections 2.3 and 5.1, the fact that powers of 2 vanish modulo $2^k$ acts as a global algebraic sink. For $m = 3$, as soon as the input $n$ equals or exceeds $k-3$, the modular evaluation enters an absorbing state, trapping all subsequent trajectories.

Consequently, if $n$ is sampled uniformly from the entire set, the single residue $2^k - 3$ absorbs a crushing probability mass of at least $1 - k/2^k$. Rather than providing cryptographic diffusion or pseudo-random distribution, the canonical modular Ackermann hierarchy suffers a structural collapse, driving the total variation distance from the uniform distribution asymptotically to $1$.

\section{Tetration modulo powers of two and structural collapse}

The collapse of the modular Ackermann hierarchy at $m=4$ is intimately linked to the algebraic property of tetration (iterated exponentiation) modulo $2^k$.

\subsection{Tetration modulo $2^k$}

Consider the iterated map $f(n) = \text{res}_{2^k}(2^n)$.

\begin{theorem}[Rapid stabilization of tetration modulo $2^k$]
\label{thm:tetration_mod2k}
Let $k \ge 3$. For any initial value $n \in \mathcal{R}_{2^k}$, the tetration sequence 
\[
n_0 = n, \quad n_{t+1} = \text{res}_{2^k}(2^{n_t})
\]
is highly contractive and enters a periodic cycle of length bounded above by $k+1$ after at most $k$ iterations, driven by the structure of the base.
\end{theorem}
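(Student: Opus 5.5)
Since $f(n)=\text{res}_{2^k}(2^n)$ is a self-map of the finite set $\mathcal{R}_{2^k}$, every orbit is eventually periodic. The proof therefore reduces to describing the image of $f$ explicitly and tracking orbits through it. The key observation, as in the proof of Theorem~\ref{thm:absorption_m3}, is that $2^{n}\equiv 0 \pmod{2^k}$ as soon as $n\ge k$. For $0\le n<k$ no reduction occurs, so $f(n)=2^n$. Hence the image of $f$ is
\[
f(\mathcal{R}_{2^k})=\{0\}\cup\{2^j: 0\le j\le k-1\},
\]
a set of at most $k+1$ elements, and every orbit lies in this set from $n_1$ onward. This already gives the cycle-length bound: any cycle lies inside $f(\mathcal{R}_{2^k})$, so its length is at most $k+1$.

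Next I will determine the dynamics on this small set exactly, which will give much stronger information than the stated bound. Write $m_t=n_t$. Iterating on powers of two gives $2^j\mapsto 2^{2^j}$ if $2^j<k$, and $2^j\mapsto 0$ if $2^j\ge k$. Also $0\mapsto 2^0=1\mapsto 2\mapsto 4\mapsto 16\mapsto\cdots$, which is ordinary tetration until the value reaches or exceeds $k$. Once some $n_t\ge k$, the next iterate is $0$ and the chain $0\mapsto1\mapsto2\mapsto4\mapsto16\mapsto\cdots$ restarts. So the unique cycle is
\[
0\mapsto 1\mapsto 2\mapsto 2^2\mapsto 2^{2^2}\mapsto\cdots\mapsto T_r\mapsto 0,
\]
where $T_0=1$, $T_{i+1}=2^{T_i}$ are the tetration values and $r$ is the least index with $T_r\ge k$. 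Its length is $r+2$. This is $O(\log^* k)$, and in particular it is $\le k+1$ for $k\ge 3$, since the $T_i$ for $i\ge1$ are distinct powers of two less than $k$ except the last one. I will verify that the cycle really closes, i.e.\ that $T_r<2^k$ so that $\text{res}(2^{T_{r-1}})=T_r$, using $T_{r-1}<k$.

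For the preperiod bound, I will show every orbit hits the cycle quickly. If $n\ge k$, then $n_1=0$, which is on the cycle. If $n<k$, then $n_1=2^n$. Either $2^n\ge k$, in which case $n_2=0$, or $2^n<k$, in which case the iterates remain powers of two below $k$ and grow strictly, since $2^x>x$. They therefore reach a value $\ge k$ within at most $\log_2 k$ steps and then hit $0$. So the orbit enters the cycle after at most about $2+\log_2 k\le k$ iterations. For small $k$ (e.g.\ $k=3,4$) I will check the inequality by hand.

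The main obstacle is interpretive rather than technical. ``Highly contractive'' and ``driven by the structure of the base'' are not precise, so I will formalize them as two claims: the image after one step has size at most $k+1$, and the explicit cycle and entry-time description above holds. A minor delicacy is that $0$ and $1$ lie on the cycle, so ``entering'' must be measured carefully for starting points already on the cycle. I will handle this by observing that the cycle equals the eventual image $\bigcap_t f^t(\mathcal{R}_{2^k})$.
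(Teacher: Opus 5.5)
Your proof is correct and follows the same route as the paper's sketch: inputs $n\ge k$ are sent to $0$, the tetration chain $0\mapsto1\mapsto2\mapsto4\mapsto16\mapsto\cdots$ returns to $0$ once it reaches a value $\ge k$, and inputs $n<k$ climb above $k$ before being absorbed. Your version is more rigorous than the paper's: the image bound $|f(\mathcal{R}_{2^k})|=k+1$ actually justifies the cycle-length claim, and your strict-growth count repairs the paper's assertion that the first or second iterate of $n<k$ already exceeds $k$, which fails for large $k$ (e.g.\ $3\mapsto8\mapsto256\mapsto2^{256}$ when $k=300$).
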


\begin{proof}
If $n \ge k$, then $2^n \equiv 0 \pmod{2^k}$. Once zero occurs, the iterates follow a fixed sequence: $0 \mapsto 1 \mapsto 2 \mapsto 4 \mapsto \dots \mapsto 0$ or fall directly into a sub-cycle. If $n < k$, its first or second iterate quickly exceeds $k$, absorbing the trajectory into the exact same localized dynamic.
\end{proof}

\subsection{Implications for the hierarchy collapse}

At level $m=4$, the modular Ackermann map computes a tetration sequence. The behavior of powers of $2 \pmod{2^k}$ does not generate chaos or pseudo-randomness; rather, it acts as an algebraic sink. 

The transition from $m=3$ to $m=4$ is a transition from a simple exponential sequence to a highly degenerate absorbing cascade. Figure~\ref{fig:tetration_cycles} illustrates how the trajectories are pulled into fixed cycles, confirming that the apparent uniform mixing is completely lost on a global scale.

\begin{figure}[H]
\centering
\begin{tikzpicture}[node distance=1.8cm, auto, >={Stealth}]
\node (n0) [circle, draw, thick] {0};
\node (n1) [circle, draw, thick, right of=n0] {1};
\node (n2) [circle, draw, thick, right of=n1] {2};
\node (n4) [circle, draw, thick, right of=n2] {4};
\node (n8) [circle, draw, thick, right of=n4] {8};

\draw[->, thick] (n0) -- (n1);
\draw[->, thick] (n1) -- (n2);
\draw[->, thick] (n2) -- (n4);
\draw[->, thick] (n4) -- (n8);
\draw[->, thick] (n8) [bend left=45] to (n0);

\node[below=1.3cm of n2, font=\small\bfseries] {absorbed tetration cycle ($m=4$)};

\node (e1) [circle, draw, red, thick, above=1.5cm of n1] {1};
\node (e2) [circle, draw, red, thick, right of=e1] {2};
\node (e4) [circle, draw, red, thick, right of=e2] {4};

\draw[->, red, thick] (e1) -- (e2);
\draw[->, red, thick] (e2) -- (e4);
\draw[->, red, thick] (e4) [bend right=45] to (e1);

\node[above=0.5cm of e2, color=red, font=\small\bfseries] {exponential residues ($m=3$)};

\end{tikzpicture}
\caption{Illustration of the structural collapse modulo $2^k$. Instead of expanding complexity, the nested dynamics force the trajectories straight into highly localized absorbing cycles.}
\label{fig:tetration_cycles}
\end{figure}

\subsection{Alternative convention: final-only reduction}

An alternative definition, separating the recursive steps from the projection, computes the classical unbounded Ackermann function first and applies the reduction only at the end: 
\[
G_{m,k}(n) = \text{res}_{2^k}(A(m,n)).
\]
Under powers of two, this variant exhibits a behavior that confirms the robustness of the collapse, although its rigorous proof is limited to lower depths.

\begin{proposition}
Let $k \ge 3$ and define $G_{3,k}(n) = \mathrm{res}_{2^k}(A(3,n))$.
Then for all $n \ge k$,
\[
G_{3,k}(n) = 2^k - 3.
\]
\end{proposition}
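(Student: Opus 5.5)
The plan is to reduce the statement to the classical closed form $A(3,n)=2^{n+3}-3$ quoted in the subsection on growth. Once that identity is available, the claim is a one-line congruence.

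The first step is to establish the closed forms for the unreduced Ackermann function by induction. I would prove in sequence
\[
A(1,n)=n+2,\qquad A(2,n)=2n+3,\qquad A(3,n)=2^{n+3}-3,
\]
each time by induction on $n$ using the recursion. For $m=3$ the base case is $A(3,0)=A(2,1)=5=2^3-3$. The inductive step is
\[
A(3,n+1)=A\bigl(2,A(3,n)\bigr)=2\bigl(2^{n+3}-3\bigr)+3=2^{n+4}-3.
\]
The earlier cases go the same way, from $A(1,0)=A(0,1)=2$ and $A(2,0)=A(1,1)=3$. The main care point is here: these identities concern the unbounded function $A$, not $A_N$. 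So I will not invoke Theorem~\ref{thm:absorption_m3}, because the remark on failure of functorial descent warns that the truncated map $A_N$ is not a priori the reduction of $A$.

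The second step is the reduction. For $n\ge k$ we have $n+3\ge k$, so $2^k$ divides $2^{n+3}$ and $A(3,n)\equiv -3 \pmod{2^k}$. Since $k\ge 3$ gives $2^k-3\in\{0,\dots,2^k-1\}$, the canonical projection yields $G_{3,k}(n)=\mathrm{res}_{2^k}(-3)=2^k-3$.

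I would close by noting that the argument actually gives the sharper range $n\ge k-3$, in agreement with part (ii) of Theorem~\ref{thm:absorption_m3}. The hypothesis $n\ge k$ is therefore more than sufficient. I expect no real obstacle; the only substantive content is the inductive closed form, and the rest is routine arithmetic modulo $2^k$.
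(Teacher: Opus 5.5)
Your proof is correct and is essentially the paper's argument: use $A(3,n)=2^{n+3}-3$, note that $2^k$ divides $2^{n+3}$ once $n+3\ge k$, and conclude $\mathrm{res}_{2^k}(-3)=2^k-3$. The only differences are that you derive the closed form by induction rather than quoting it, and you correctly observe that $n\ge k-3$ already suffices.
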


\begin{proof}
For $m=3$, $A(3,n) = 2^{n+3}-3$. If $n \ge k$, then $n+3 \ge k$, hence $2^{n+3}$ is divisible by $2^k$, so $2^{n+3} \equiv 0 \pmod{2^k}$. Thus
\[
G_{3,k}(n) = (2^{n+3}-3) \bmod 2^k = 2^k - 3.
\]
\end{proof}

We emphasize that this analytical verification is specific to depth $m=3$ and relies strictly on the explicit exponential form of $A(3,n)$. For higher recurrence depths ($m \ge 4$), no analogous closed-form expression is currently available, and the non-linear propagation of the full modular recursion remains analytically intractable in closed form beyond low depths.

\section{Conclusion}

Modular truncations of the Ackermann function reveal that fast-growing recursive hierarchies do not imply robust mixing properties when combined with modular rings. Modulo powers of two, the canonical least-residue recursion suffers from a severe absorption defect caused by the vanishing of high powers of 2. All inputs above the threshold accumulate onto the isolated residue $2^k-3$, making the distribution highly non-uniform. This study highlights a critical pitfall in evaluating recursive functions on truncated computational domains, where localized numerical tests can create an illusion of mixing that disappears on the full ring.

\vspace{0.5cm}

\noindent {\bf Acknowledgements}

\vspace{0.2cm}

\noindent I would like to thank V. Rozho\v{n}, R. \v{S}\'amal and A. Z\'ame\v{c}n\'ik for enlightening comments.

\end{document}